\newtheorem{lemma}{Lemma}
\newtheorem{proposition}{Proposition}
\newtheorem{algo}{Algorithm}
\DeclareMathOperator{\id}{Id}
\newcommand{\ta}[1]{T_{#1}\Theta}
\newcommand{\et}{\boldsymbol{\eta}}
\newcommand{\mh}{\mathcal{H}}
\newcommand{\ti}{T}
\newcommand{\pa}[2]{\frac{\partial }{\partial {#2}}\ti_{#1}(\eta)}
\newcommand{\pav}[2]{\frac{\partial }{\partial {#2}}\ti_{#1}(\eta^{#1})}
\newcommand{\ngm}[2]{montee_modele_#1_algo_#2.tex}
\newcommand{\ngl}[2]{logeta_modele_#1_algo_#2.tex}
\newcommand{\ngr}[2]{diffregret_modele_#1_algo_#2.tex}
\newcommand{\ngmm}[2]{montee_meilleur_modele_#1_algo_#2.tex}
\newcommand{\ngcvinvpvp}[2]{cv_inv_pvp_modele_#1_algo_#2.tex}
\newcommand{\nom}[2]{#1_#2}
\begin{document}

\title{Speed learning on the fly}
\author{Pierre-Yves Massé, Yann Ollivier}
\date{}

\maketitle

\newcommand{\deq}{\mathrel{\mathop:}=}
\newcommand{\eqd}{=\mathrel{\mathop:}}

\begin{abstract}
The practical performance of online stochastic gradient descent
algorithms is highly dependent on the chosen step size, which must be
tediously hand-tuned in many applications. The same is true for more
advanced variants of stochastic gradients, such as SAGA, SVRG, or
AdaGrad. Here we propose to adapt the step size by performing a
gradient descent on the step size itself, viewing the whole performance of
the learning trajectory as a function of step size. Importantly, this
adaptation can be computed online at little cost, without having to iterate backward
passes over the full data.
\end{abstract}

\section*{Introduction}


This work aims at improving gradient ascent procedures for use in machine learning contexts, by adapting the step size of the descent as it goes along.

Let $\ell_0, \ell_1, \, \ldots, \, \ell_t,\,\ldots$ be functions to be maximised over some
parameter space $\Theta$. At each time $t$, we wish to compute or approximate
the parameter $\theta_t^\ast\in\Theta$ that maximizes the sum
\begin{equation}
	\label{eq:cumloss}
	L_t(\theta)\deq \sum_{s \leq t} \ell_s(\theta).
\end{equation}
In the experiments below, as in many applications, $\ell_t(\theta)$ writes
$\ell(x_t,\theta)$ for some data $x_0, x_1, \, \ldots, \, x_t,\,\ldots$

A common strategy, especially with large data size or
dimensionality \cite{Bottou_sgd},
is the online stochastic gradient ascent (SG)
\begin{equation}
\label{eqrectheta} 
	\theta_{t+1}=\theta_t + \eta \, \partial_\theta \ell_t(\theta_t)
\end{equation}
with step size $\eta$,
where
$\partial_\theta \ell_t$ stands for the Euclidean gradient of $\ell_t$ with
respect to $\theta$.

Such an approach has become a mainstay of both the
optimisation and machine learning communities \cite{Bottou_sgd}. Various conditions for
convergence exist, starting with the celebrated article of Robbins
and Monro \cite{RM}, or later \cite{kc}.
Other types of results are proved in convex settings,

Several variants have since been introduced, in part to
improve the convergence of the algorithm, which is much slower in
stochastic than than in deterministic settings. 
For instance, algorithms such as SAGA, 
Stochastic Variance Reduced Gradient (SVRG) or Stochastic Average
Gradient (SAG) \cite{saga,svrg,bsag},
perform iterations using a comparison between the latest gradient and an average of past gradients. This
reduces the variance of the resulting estimates and allows for nice
convergence theorems \cite{saga,bsag}, provided a reasonable step size
$\eta$ is used.

\paragraph{Influence of the step size.}
The ascent requires a parameter, the step size $\eta$, usually called
``learning rate" in the machine learning community. Empirical
evidence highligting the sensitivity of the ascent to its actual
numerical value exists aplenty; see for instance the graphs in
Section~\ref{sec:trajtheta}. Slow and tedious hand-tuning is therefore mandatory
in most applications. Moreover, admittable values of $\eta$ depend on the
parameterisation retained---except for descents described in terms of
Riemannian metrics \cite{Amari98}, which provide some degree of
parameterisation-invariance.

Automated procedures for setting reasonable value of $\eta$ are therefore
of much value. For instance, AdaGrad \cite{adagrad} divides the derivative
$\partial_\theta \ell_t$ by a root mean square average of the magnitude
of its recent values, so that the steps are of size approximately $1$;
but this still requires a ``master step size'' $\eta$.

Shaul, Zhang and LeCun in \cite{pesky} study a simple separable quadratic loss
model and compute the value of $\eta$ which minimises the expected loss
after each parameter update. This value can be expressed in terms of
computable quantities depending on the trajectory of the descent. These
quantities still make sense for non-quadratic models, making this idea
amenable to practical use.


More recently, Maclaurin, Douglas and Duvenaud \cite{gbho} propose to
directly conduct a gradient ascent on the hyperparameters (such as the
learning rate $\eta$) of any algorithm. The gradients with respect to the
hyperparameters are computed
exactly by ``chaining derivatives backwards through the entire training
procedure'' \cite{gbho}. Consequently, this approach is extremely
impractical in an online setting, as it optimizes the learning rate by
performing several passes, each of which goes backwards from time $t$ to
time $0$.

\paragraph{Finding the best step size.} The ideal value of the step size $\eta$ would be the one that maximizes
the cumulated objective function \eqref{eq:cumloss}.
Write $\theta_t(\eta)$ for the parameter value obtained after $t$
iterations of the gradient step \eqref{eqrectheta} using a given value $\eta$, and consider the sum
\begin{equation}
\label{eqcumloss}
	\sum_{s \leq t} \ell_s(\theta_s(\eta)).
\end{equation}
Our goal is to find an online way to approximate the value of $\eta$ that
provides the best value of this sum. This can be viewed as an ascent on the space of stochastic ascent
algorithms. 

We suggest to update $\eta$ through a stochastic gradient ascent
on this sum:
\begin{equation}
	\label{equeta} 
	\eta \leftarrow \eta + \alpha \frac{\partial}{\partial \eta} \ell_t(\theta_t(\eta))
\end{equation}
and then to use, at each time, the resulting value of $\eta$ for the next
gradient step \eqref{eqrectheta}.



The ascent \eqref{equeta} on $\eta$ depends, in turn, on a step size
$\alpha$. Hopefully,
the dependance on $\alpha$ of the whole procedure is somewhat lower than
that of the original stochastic gradient scheme on its step size $\eta$.

This approach immediately extends to other stochastic gradient
algorithms; in what follows we apply it both to the standard SG ascent
and to the SVRG algorithm.

The main point in this approach is to find efficient ways to compute or
approximate the derivatives
$\frac{\partial}{\partial \eta} \ell_t(\theta_t(\eta))$. Indeed, the value
$\theta_t(\eta)$ after $t$ steps depends on the
whole trajectory of the algorithm, and so does its derivative with
respect to $\eta$.

After reviewing the setting for gradient ascents in Section~\ref{sec:sg}, 
in Section~\ref{sec:quadraticalg} we provide an exact but impractical way of computing the
derivatives $\frac{\partial}{\partial \eta} \ell_t(\theta_t(\eta))$. 
Sections~\ref{sec:SGSG}--\ref{sec:SGAG} contain
the main contribution: SG/SG and SG/AG, practical algorithms to adjust $\eta$
based on two approximations with respect to these exact
derivatives.

Section~\ref{sec:generalLLR} extends this to other base algorithms
such as SVRG.
In Section~\ref{sec:fllr} one of the approximations is
justified by showing that it computes a derivative, not with respect to a
fixed value of $\eta$ as in \eqref{equeta}, but with respect to the sequences of values of
$\eta$ effectively used along the way. This also suggests improved
algorithms.

Section~\ref{sec:exp} provides experimental comparisons of gradient ascents
using traditional algorithms with various values of $\eta$, and the same
algorithms where $\eta$ is self-adjusted according to our scheme. The
comparisons are done on three sets of synthetic data: a one-dimensional
Gaussian model, a one-dimensional Bernoulli model and a 50-dimensional
linear regression model: these simple models already exemplify
the strong dependence of the traditional algorithms on the value of
$\eta$.

\paragraph{Terminology.} We say that an algorithm is of type ``LLR'' for
``Learning the Learning Rate'' when it updates its step size
hyperparameter $\eta$ as it unfolds.  We refer to LLR algorithms
by a compound abbreviation: ``SVRG/SG'', for instance, for an
algorithm which updates its parameter $\theta$ through SVRG and its
hyperparameter $\eta$ through an SG algorithm on $\eta$. 

\section{The Stochastic Gradient algorithm}
\label{sec:sg}
To fix ideas, we define the Stochastic Gradient (SG) algorithm as
follows.
In all that follows, $\Theta=\mathbb{R}^n$ for some
$n$.\footnote{$\Theta$ may also be any Riemannian manifold, a natural
setting when dealing with gradients.
Most
of the text is written in this spirit.}
The functions $\ell_t$ are assumed to be smooth.
In all our algorithms, the index $t$ starts at $0$.
\begin{algo}[Stochastic Gradient]
We maintain $\theta_t \in \Theta$ (current parameter), initialised at some arbitrary $\theta_0 \in \Theta$.
We fix $\eta \in \mathbb{R}$.
At each time $t$, we fix a rate $f(t) \in \mathbb{R}$.
The update equation reads:
\begin{equation}
  \label{eqrecthetaf} 
 \theta_{t+1} = \theta_t + \frac{\eta}{f(t)} \partial_\theta \ell_t(\theta_t). \\
\end{equation}
\end{algo}
The chosen rate $f(t)$ usually satisfies the well-known Robbins--Monro conditions \cite{RM}:
\begin{equation}
  \sum_{t \geq 0} f(t)^{-1} = \infty, \hspace{.5 cm} \sum_{t \geq 0} f(t)^{-2} < \infty.
\end{equation}
The divergence of the sum of the rates allows the ascent to go anywhere
in parameter space, while the convergence of the sum of the squares
ensures that variance remains finite.
Though custom had it that small such rates should be chosen, such as $f(t)=1/t$, recently the trend bucked towards the use of large ones, to allow for quick exploration of the parameter space. 
Throughout the article and experiments we use one such rate:
\begin{equation}
f(t)=\sqrt{t+2} \log(t+3).
\end{equation}
\section{Learning the learning rate on a stochastic gradient algorithm}
\label{secllrsg} 
\subsection{The loss as a function of step size}
\label{sec:quadraticalg}
To formalise what we said in the introduction, let us define, for each $\eta \in \mathbb{R}$, the sequence
\begin{equation}
\left( \theta_0, \theta_1, \theta_2, \ldots \right)
\end{equation}
obtained by iterating \eqref{eqrecthetaf} from some initial value $\theta_0$. 
Since they depend on $\eta$, we introduce, for each $t > 0$, the operator
\begin{equation}
\ti_t : \eta \in \mathbb{R} \mapsto \ti_t(\eta) \in \Theta,
\end{equation}
which maps any $\eta \in \mathbb{R}$ to the parameter $\theta_t$ obtained after $t$ iterations of \eqref{eqrecthetaf}. $\ti_0$ maps every $\eta$ to $\theta_0$. For each $t \geq 0$, the map $\ti_t$ is a regular function of $\eta$.
As explained in the introduction, we want to optimise $\eta$ according to the function:
\begin{equation}
\mathcal{L}_t(\eta) \deq \sum_{s \leq t} \ell_s(\ti_s(\eta)),
\end{equation}
by conducting an online stochastic gradient ascent on it.
We therefore need to compute the derivative in \eqref{equeta}:
\begin{equation}
  \frac{\partial}{\partial \eta} \ell_t(\ti_t(\eta)).
\end{equation}
To act more decisively on the order of magnitude of $\eta$, we perform an
ascent on its logarithm, so that we actually need to compute\footnote{
This is an abuse of notation as $\ti_t$
is not a function of $\log \eta$ but of $\eta$. Formally, we would need
to replace $\ti_t$ with $\ti_t \circ \exp$, which we refrain from
doing to avoid burdensome notation.}:
\begin{equation}
  \frac{\partial}{\partial \log \eta} \ell_t(\ti_t(\eta)).
\end{equation}
Now, the derivative of the loss at time $t$ with respect to $\eta$ can be
computed as the product of the derivative of $\ell_t$ with respect to
$\theta$ (the usual input of SG) and the derivative of $\theta_t$ with
respect to $\eta$:
\begin{equation}
\frac{\partial}{\partial \log \eta} \ell_t(\ti_t(\eta)) = \partial_\theta \ell_t(\ti_t(\eta)) \cdot A_t(\eta)
\end{equation}
where
\begin{equation}
	A_t(\eta) \deq \frac{\partial \ti_t(\eta)}{\partial \log \eta}. 
\end{equation}
Computation of the quantity $A_t$ and its approximation $h_t$ to be
introduced later, are the main focus of this text.
\begin{lemma}
\label{eqrecA} 
The derivative $A_t(\eta)$ may be computed through the following recursion equation.
$A_0(\eta)=0$ and, for $t\geq0$,
\begin{equation}
A_{t+1}(\eta)= A_t(\eta) + \frac{\eta}{f(t)} \partial_\theta
\ell_t(\ti_t(\eta)) + \frac{\eta}{f(t)} \, \partial_{\theta}^2 \ell_t(\ti_t(\eta)) \cdot A_t(\eta). 
\end{equation}
\end{lemma}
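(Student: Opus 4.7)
The plan is a direct application of the chain rule to the SG recursion, combined with an induction on $t$.

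I would start from the update equation
\begin{equation*}
\ti_{t+1}(\eta) = \ti_t(\eta) + \frac{\eta}{f(t)}\,\partial_\theta \ell_t(\ti_t(\eta))
\end{equation*}
which defines $\ti_{t+1}$ as a smooth function of $\eta$ whenever $\ti_t$ is, so the derivative with respect to $\log\eta$ makes sense. Differentiating both sides with respect to $\log\eta$ and using the definition $A_t(\eta) = \partial \ti_t(\eta)/\partial\log\eta$, the left-hand side becomes $A_{t+1}(\eta)$ and the first term on the right contributes $A_t(\eta)$.

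For the second term, I would apply the product rule to $(\eta/f(t))\cdot\partial_\theta \ell_t(\ti_t(\eta))$, treating $f(t)$ as a constant (it does not depend on $\eta$). The derivative of $\eta/f(t)$ with respect to $\log\eta$ is $\eta/f(t)$, giving the term $\frac{\eta}{f(t)}\,\partial_\theta \ell_t(\ti_t(\eta))$. The derivative of $\partial_\theta \ell_t(\ti_t(\eta))$ with respect to $\log\eta$ is obtained by chain rule, yielding the Hessian $\partial_\theta^2 \ell_t(\ti_t(\eta))$ applied to $\partial \ti_t(\eta)/\partial\log\eta = A_t(\eta)$. Multiplying by $\eta/f(t)$ produces the last term $\frac{\eta}{f(t)}\,\partial_\theta^2\ell_t(\ti_t(\eta))\cdot A_t(\eta)$, matching the claim.

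For the base case, $\ti_0$ is the constant map $\eta\mapsto\theta_0$, so $A_0(\eta)=0$. The induction step is exactly the computation above, which is valid because of the assumed smoothness of the $\ell_t$ (the Hessian exists) and the smooth dependence of $\ti_t(\eta)$ on $\eta$ (an immediate induction from the recursion).

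There is no real obstacle here; the only point requiring care is the distinction between differentiation with respect to $\eta$ versus $\log\eta$, which introduces the factor $\eta$ in both new terms, and the fact that $\partial_\theta^2\ell_t$ acts as a linear map on the vector $A_t(\eta)\in\ta{\ti_t(\eta)}$, so the notation $\partial_\theta^2 \ell_t\cdot A_t$ should be read as a Hessian-vector product.
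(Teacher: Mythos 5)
Your proposal is correct and follows essentially the same route as the paper: differentiate the SG recursion \eqref{eqrecthetaf} with respect to $\log\eta$, apply the product rule to the $\eta/f(t)$ factor and the chain rule to $\partial_\theta\ell_t(\ti_t(\eta))$, with $A_0(\eta)=0$ since $\theta_0$ is fixed. The only difference is that you make the smoothness induction explicit, which the paper leaves implicit.
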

The proof lies in Section~\ref{cllrsg}.
This update of $A$ involves the Hessian of the loss function with respect
to $\theta$, evaluated in the direction of $A_t$. Often this quantity is
unavailable or too costly. Therefore we will
use a finite difference approximation instead: 
\begin{equation}
  \partial_{\theta}^2 \ell_t(\ti_t(\eta)) \cdot A_t(\eta) \approx \partial_\theta \ell_t \left(\ti_t(\eta) + A_t(\eta)\right) - \partial_\theta \ell_t(\ti_t(\eta)).
 \end{equation}
This design ensures that the resulting update on $A_t(\eta)$ uses the gradient
of $\ell_t$ only once:
\begin{equation}
\label{eq:Anohessian}
A_{t+1}(\eta) \approx A_t(\eta) + \frac{\eta}{f(t)} \partial_\theta \ell_t \left(\ti_t(\eta) + A_t(\eta) \right). 
\end{equation}

An alternative approach would be to compute the Hessian in the direction
$A_t$ by numerical differentiation.

\subsection{LLR on SG: preliminary version with simplified expressions (SG/SG)}
\label{sec:SGSG}
Even with the approximation above, computing the quantities $A_t$ would
have a quadratic cost in $t$: each time we update $\eta$ thanks to
\eqref{equeta}, 
we would need to compute anew all the $A_s(\eta), \, s \leq t$, as well
as the whole trajectory $\theta_t=T_t(\eta)$, at each
iteration $t$.
We therefore replace the $A_t(\eta)$'s by online approximations, the
quantities $h_t$, which implement the same evolution equation
$\eqref{eq:Anohessian}$ as $A_t$, disregarding the fact that $\eta$ may
have changed in the meantime. These quantities will be interpreted more properly in Section~\ref{sec:fllr} as
derivatives taken along the effective trajectory of the ascent.
This yields the SG/SG algorithm.
\begin{algo}[SG/SG]
	\label{asg2}
  We maintain $\theta_t \in \Theta$ (current parameter), $\eta_t \in
  \mathbb{R}$ (current step size) and $h_t \in \ta{\theta_t}$
  (approximation of the derivative of $\theta_t$ with respect to
  $\log(\eta)$).

  The first two are initialised arbitrarily,
  and $h_0$ is set to 0.

The update equations read:
\begin{equation}
	\left\lbrace \begin{aligned}
		\log \eta_{t+1} &= \log \eta_t + \frac{1}{\mu_t} \, \partial_\theta \ell_t(\theta_t) \cdot h_t \\
		h_{t+1} &= h_t + \frac{\eta_{t+1}}{f(t)} \, \partial_\theta \ell_t\left( \theta_t+ h_t \right) \\
		\theta_{t+1} &= \theta_t + \frac{\eta_{t+1}}{f(t)}
		\partial_\theta \ell_t(\theta_t)\,,
\end{aligned} \right.
\end{equation}
where $\mu_t$ is some learning rate on $\log \eta$, such as $\mu_t =
\sqrt{t+2}\log(t+3)$.
\end{algo}
\subsection{LLR on SG: efficient version (SG/AG)}
\label{sec:SGAG}
To obtain better performances, we actually use an adagrad-inspired scheme
to update the logarithm of the step size.
\begin{algo}[SG/AG]
	We maintain $\theta_t \in \Theta$ (current parameter), $\eta_t \in \mathbb{R}$ (current step size), $h_t \in \ta{\theta_t}$ (approximation of the derivative of $\theta_t$ with respect to $\log(\eta)$), $n_t \in \mathbb{R}$ (average of the squared norms of $\partial \ell_t \circ T_t / \partial \log \eta)$, and $d_t \in \mathbb{R}$ (renormalising factor for the computation of $n_t$).

	$\theta$ et $\eta$ are initially set to $\theta_0$ and $\eta_0$, the other variables are set to $0$.

	At each time $t$, we compute $\mu_t \in \mathbb{R}$ (a rate used in several updates), and $\lambda_t \in \mathbb{R}$ (the approximate derivative of $\ell_t \circ \theta_t$ with respect to $\log(\eta)$ at $\eta_t$).

The update equations read:
\begin{equation}
	\left\lbrace \begin{aligned}
		\mu_t &= \sqrt{t+2}\log(t+3) \\
		\lambda_t &= l(\theta_t) \cdot h_t \\
	  d_{t+1} &=\left(1-\frac{1}{\mu_t} \right) d_t + \frac{1}{\mu_t} \\
		n_{t+1}^2 &= \left( \left(1-\frac{1}{\mu_t} \right) n_t^2 + \frac{1}{\mu_t} \lambda_t^2 \right) d_{t+1}^{-1} \\
	  \log \eta_{t+1} &= \log \eta_t + \frac{1}{\mu_t} \, \frac{\lambda_t}{n_{t+1}} \\
	  h_{t+1} &= h_t + \frac{\eta_{t+1}}{f(t)} \, \partial_\theta \ell_t\left(\theta_t+ h_t \right) \\
 \theta_{t+1} &= \theta_t + \frac{\eta_{t+1}}{f(t)} \, \partial_\theta \ell_t(\theta_t).
	\end{aligned} \right.
      \end{equation}
\end{algo}

\subsection{LLR on other Stochastic Gradient algorithms}
\label{sec:generalLLR}

The LLR procedure may be applied to any stochastic gradient algorithm of the form
\begin{equation}
\theta_{t+1}=F(\theta_t,\eta_t)
\end{equation}
where $\theta_t$ may store all the information maintained by the
algorithm, not necessarily just a parameter value.
Appendix~\ref{llrgen} presents the algorithm in this case.
Appendix~\ref{llrsvrg} presents SVRG/AG, which is the particular case of
this procedure applied to SVRG with an AdaGrad scheme for the update of $\eta_t$.

\section{Experiments on SG and SVRG} 
\label{sec:exp}
We now present the experiments conducted to test our procedure. We first describe the experimental set up, then discuss the results. 
\subsection{Presentation of the experiments}
We conducted ascents on synthetic data generated by three different probabilistic models: a one-dimensional Gaussian model, a Bernoulli model and a $50$-dimensional linear regression model. Each model has two components: a generative distribution, and a set of distributions used to approximate the former.
\paragraph{One Dimensional Gaussian Model.}
The mean and value of the Gaussian generative distribution were set to $5$ and $2$ respectively. Let us note $p_\theta$ the density of a standard Gaussian random variable. 
The function to maximise we used is:
\begin{equation}
	\ell_t(\theta) = \log p_\theta(x_t) = -\frac{1}{2}(x_t - \theta)^2.
\end{equation}
\paragraph{Bernoulli model.}
The parameter in the standard parameterisation for the Bernoulli model
was set to $p=0.3$, but we worked with a logit parameterisation
$\theta=\log(p/(1-p))$ for both
the generative distribution and the discriminative function. The latter
is then:
\begin{equation}
  \ell_t(\theta) = \theta \cdot x_t - \log \left( 1 + e^\theta \right). 
\end{equation}
\paragraph{Fifty-dimensional Linear Regression model.}
In the last model, we compute a fixed random matrix $M$. We then draw
samples $Z$ from a standard $50$-dimensional Gaussian distribution. We
then use $M$ to make random linear combinations $X=MZ$ of the coordinates
of the $Z$ vectors. Then we observe $X$ and try to recover first
coordinate of the sample $Z$. The solution $\theta^*$ is the first row of
the inverse of $M$. Note $Y$ the first coordinate of $Z$ so that the regression pair is $(X,Y)$.
We want to maximise:
\begin{equation}
  \ell_t(\theta) = -\frac{1}{2} \left( y_t - \theta \cdot x_t \right)^2,
\end{equation}
\bigskip
For each model, we drew 2500 samples from the data (7500 for the $50$-dimensional model), then conducted ascents on those with on the one hand the SG and SVRG algorithms, and on the other hand their LLR counterparts, SG/SG and SVRG/SG, respectively.
\subsection{Description and analysis of the results}
For each model, we present four different types of results. We start with
the trajectories of the ascents for several initial values of $\eta$ (in
the $50$-dimensional case, we plot the first entry of $\theta^T \cdot M$). 
Then we present the cumulated regrets. Next we show the evolution of the
logarithm of $\eta_t$ along the ascents for the LLR algorithms. Finally,
we compare this to trajectories of the non-adaptive algorithms with good initial values of $\eta$.
Each time, we present three figures, one for each model.

\begin{figure}[!h]
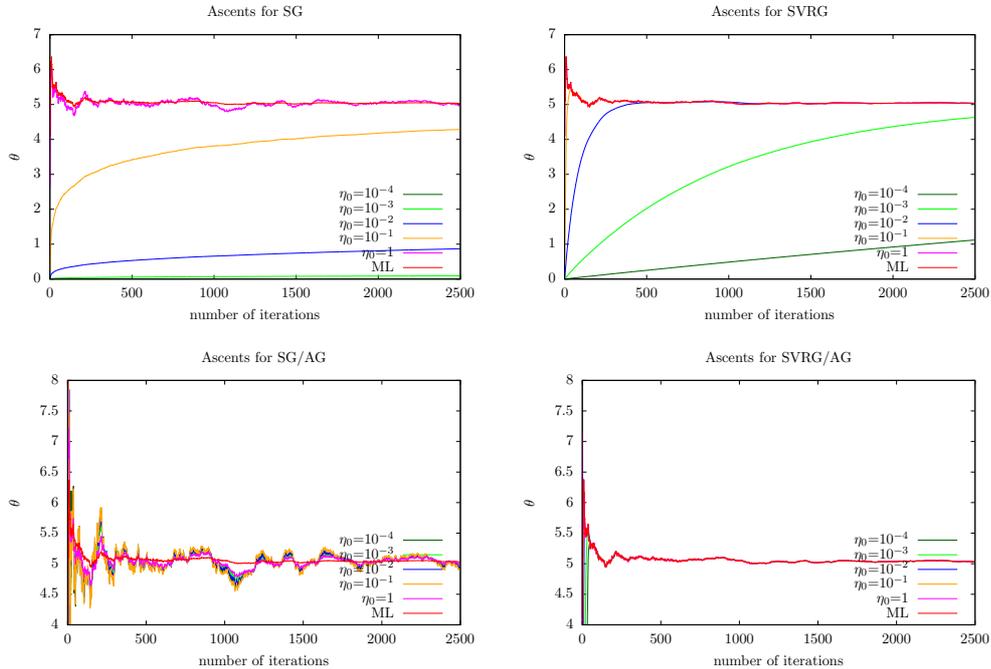

\centering
\begin{tabular}{cc}
  \scalebox{.5}{\input{\ngm{GAUSSIEN}{SG}}} &
  \scalebox{.5}{\input{\ngm{GAUSSIEN}{SVRG}}} \\ 
  \scalebox{.5}{\input{\ngm{GAUSSIEN}{\nom{SG}{SG}}}} &
  \scalebox{.5}{\input{\ngm{GAUSSIEN}{\nom{SVRG}{SG}}}}
\end{tabular}
\caption{Trajectories of the ascents for a Gaussian model in one dimension for several algorithms and several $\eta_0$'s}
\label{mg1d}
\end{figure}

Each figure of Figures~\ref{mg1d} to \ref{mrg50d} is made of four graphs: the upper ones are those of SG and SVRG, the lower ones are those of SG/SG and SVRG/SG. Figures~\ref{mg1d} to \ref{mrg50d} present the trajectories of the ascents for several orders of magnitude of $\eta_0$, while Figures~\ref{rg1d} to \ref{rrg50d} present the cumulated regrets for the same $\eta_0$'s. The trajectory of the running maximum likelihood (ML) is displayed in red in each plot.
\subsubsection{Trajectories of $\theta$}
\label{sec:trajtheta}
\begin{figure}[!h]
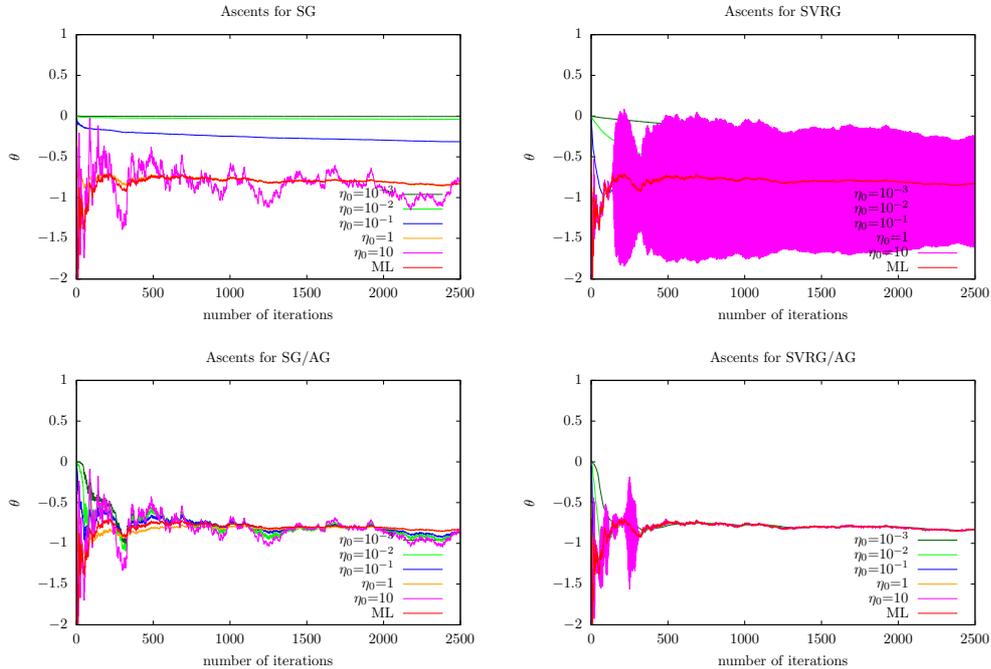

\centering
\begin{tabular}{cc}
  \scalebox{.5}{\input{\ngm{BERNOULLI}{SG}}} &
  \scalebox{.5}{\input{\ngm{BERNOULLI}{SVRG}}} \\ 
  \scalebox{.5}{\input{\ngm{BERNOULLI}{\nom{SG}{SG}}}} &
  \scalebox{.5}{\input{\ngm{BERNOULLI}{\nom{SVRG}{SG}}}}
\end{tabular}
\caption{Trajectories of the ascents for a Bernoulli model for several algorithms and several $\eta_0$'s}
\label{mb1d}
\end{figure}
Each figure for the ascent looks the same: there are several well distinguishable trajectories in the graphs of the standard algorithms, the upper ones, while trajectories are much closer to each other in those of the LLR algorithms, the lower ones.

Indeed, for many values of $\eta$, the standard algorithms will perform poorly. For instance, low values of $\eta$ will result in dramatically low convergence towards the ML, as may be seen in some trajectories of the SG graphs. The SVRG algorithm performs noticeably better, but may start to oscillate, as in Figures~\ref{mb1d} and \ref{mrg50d}.

These inconveniences are significantly improved by the use of LLR procedures. Indeed, in each model, almost every trajectory gets close to that of the ML in the SG/AG graphs. In the SVRG/AG graphs, the oscillations are overwhelmingly damped.
Improvements for SG, though significant, are not as decisive in the
linear regression model as in the other two, probably due to its greater complexity.
\begin{figure}[!h]
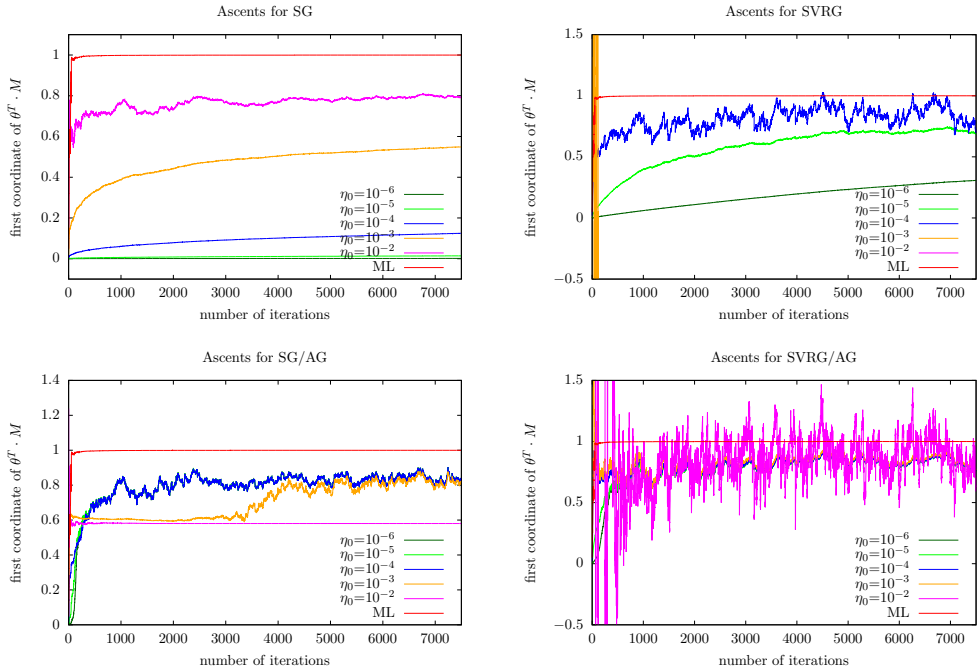

\centering
\begin{tabular}{cc}
  \scalebox{.5}{\input{\ngm{MOINDRES_CARRES}{SG}}} &
  \scalebox{.5}{\input{\ngm{MOINDRES_CARRES}{SVRG}}} \\ 
  \scalebox{.5}{\input{\ngm{MOINDRES_CARRES}{\nom{SG}{SG}}}} &
  \scalebox{.5}{\input{\ngm{MOINDRES_CARRES}{\nom{SVRG}{SG}}}}
\end{tabular}
\caption{Trajectories of the ascents for a $50$-dimensional linear regression model for several algorithms and several $\eta_0$'s}
\label{mrg50d}
\end{figure}
\begin{figure}[!h]
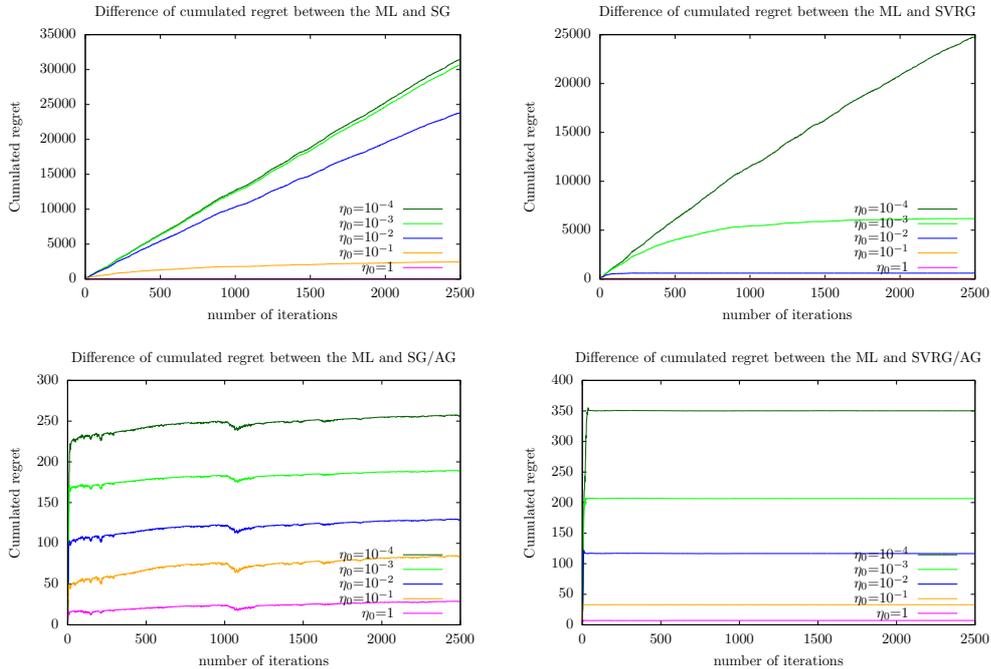

\centering
\begin{tabular}{cc}
  \scalebox{.5}{\input{\ngr{GAUSSIEN}{SG}}} &
  \scalebox{.5}{\input{\ngr{GAUSSIEN}{SVRG}}} \\ 
  \scalebox{.5}{\input{\ngr{GAUSSIEN}{\nom{SG}{SG}}}} &
  \scalebox{.5}{\input{\ngr{GAUSSIEN}{\nom{SVRG}{SG}}}}
\end{tabular}
\caption{Difference between the cumulated regrets of the algorithm and of the ML for a Gaussian model in one dimension for several algorithms and several $\eta_0$'s} 
\label{rg1d} 
\end{figure}
\subsubsection{Cumulated regrets}
\begin{figure}[!h]
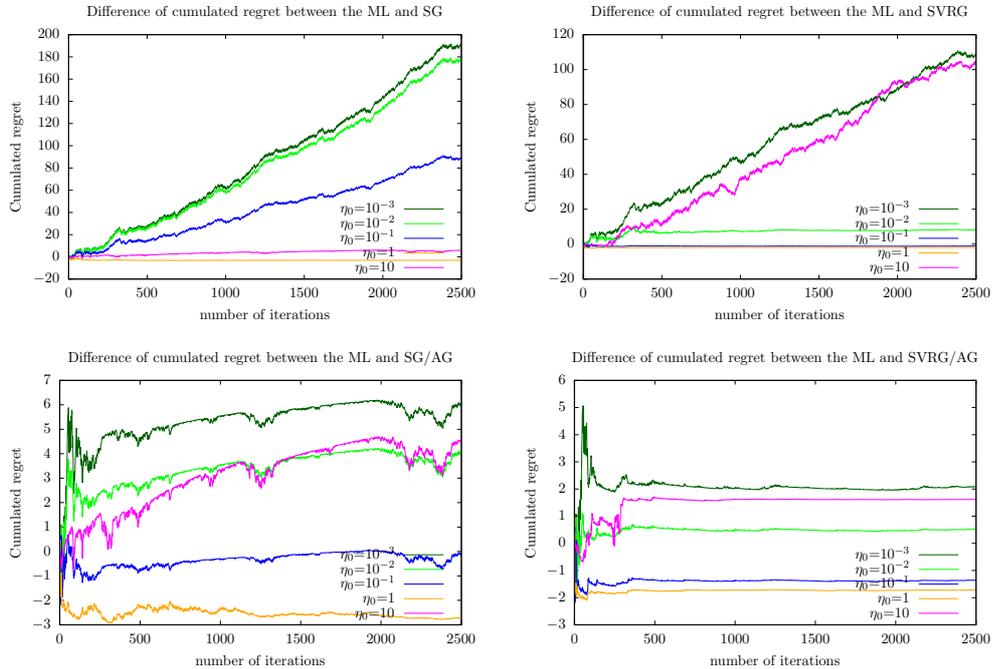

\centering
\begin{tabular}{cc}
  \scalebox{.5}{\input{\ngr{BERNOULLI}{SG}}} &
  \scalebox{.5}{\input{\ngr{BERNOULLI}{SVRG}}} \\ 
  \scalebox{.5}{\input{\ngr{BERNOULLI}{\nom{SG}{SG}}}} &
  \scalebox{.5}{\input{\ngr{BERNOULLI}{\nom{SVRG}{SG}}}}
\end{tabular}
\caption{Difference between the cumulated regrets of the algorithm and of the ML for a Bernoulli model for several algorithms and several $\eta_0$'s} 
\label{rb1d}
\end{figure}
Each curve of Figures~\ref{rg1d} to \ref{rrg50d} represents the difference between the cumulated regret of the algorithm used and that of the ML, for the $\eta_0$ chosen. The curves of SG and SVRG all go upwards, which means that the difference increases with time, whereas those of SG/AG and SVRG/SG tend to stagnate strikingly quickly. Actually, the trajectories for the linear regression model do not stagnate, but they are still significantly better for the LLR algorithms than for the original ones. The stagnation means that the values of the parameter found by these algorithms are very quickly as good as the Maximum Likelihood for the prediction task. Arguably, the fluctuations of the ascents around the later are therefore not a defect of the model: the cumulated regret graphs show that they are irrelevant for the minimisation at hand. 
\begin{figure}[!h]
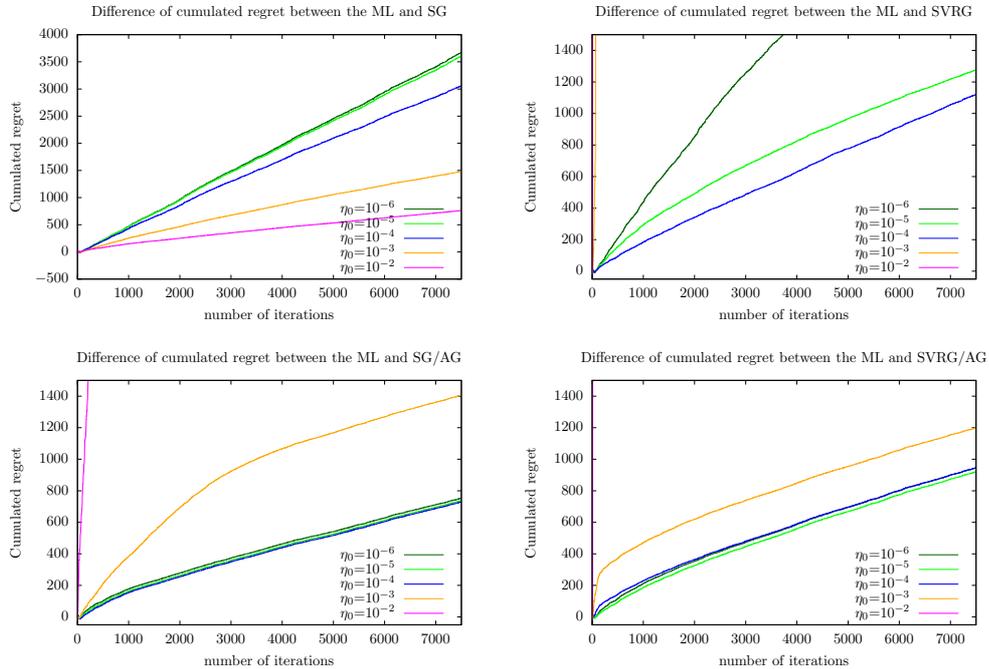

\centering
\begin{tabular}{cc}
  \scalebox{.5}{\input{\ngr{MOINDRES_CARRES}{SG}}} &
  \scalebox{.5}{\input{\ngr{MOINDRES_CARRES}{SVRG}}} \\ 
  \scalebox{.5}{\input{\ngr{MOINDRES_CARRES}{\nom{SG}{SG}}}} &
  \scalebox{.5}{\input{\ngr{MOINDRES_CARRES}{\nom{SVRG}{SG}}}}
\end{tabular}
\caption{Difference between the cumulated regrets of the algorithm and of the ML for a $50$-dimensional linear regression model for several algorithms and several $\eta_0$'s} 
\label{rrg50d}
\end{figure}
\begin{figure}[!h]
\label{lg1d}
\centering
\begin{tabular}{cc}
  \scalebox{.5}{\input{\ngl{GAUSSIEN}{\nom{SG}{SG}}}} &
  \scalebox{.5}{\input{\ngl{GAUSSIEN}{\nom{SVRG}{SG}}}} \\ 
  \scalebox{.5}{\input{\ngm{GAUSSIEN}{\nom{SG}{SG}}}} &
  \scalebox{.5}{\input{\ngm{GAUSSIEN}{\nom{SVRG}{SG}}}}
\end{tabular}
\caption{Evolution of $\log(\eta_t)$ in regard of the corresponding ascents for a Gaussian model in one dimension for $SG$ and $SVRG$ with LLR and several $\eta_0$'s}
\label{lg1d}
\end{figure}
\subsubsection{Evolution of the step size of the LLR algorithms during the ascents}
\begin{figure}[!h]
\centering
\begin{tabular}{cc}
  \scalebox{.5}{\input{\ngl{BERNOULLI}{\nom{SG}{SG}}}} &
  \scalebox{.5}{\input{\ngl{BERNOULLI}{\nom{SVRG}{SG}}}} \\ 
  \scalebox{.5}{\input{\ngm{BERNOULLI}{\nom{SG}{SG}}}} &
  \scalebox{.5}{\input{\ngm{BERNOULLI}{\nom{SVRG}{SG}}}}
\end{tabular}
\caption{Evolution of $\log(\eta_t)$ in regard of the corresponding ascents for a Bernoulli model in one dimension for SG and SVRG with LLR and several $\eta_0$'s}
\label{lb1d}
\end{figure}
Figures~\ref{lg1d} to \ref{lrg50d} show the evolution of the value of the logarithm of $\eta_t$ in the LLR procedures for the three models, in regard of the trajectories of the corresponding ascents. For the Gaussian and Bernoulli models, in Figures~\ref{lg1d} and \ref{lb1d}, $\log(\eta_t)$ tends to stagnate quite quickly. This may seem a desirable behaviour : the algorithms have reached good values for $\eta_t$, and the ascent may accordingly proceed with those. However, this analysis may seem somewhat unsatisfactory due to the $1/f(t)$ dampening term in the parameter update, which remains unaltered by our procedure. 
For the linear regression model, in Figure~\ref{lrg50d}, the convergence takes longer in the SG/SG case, and even in the SVRG/SG one, which may be explained again by the complexity of the model.
\begin{figure}[!h]
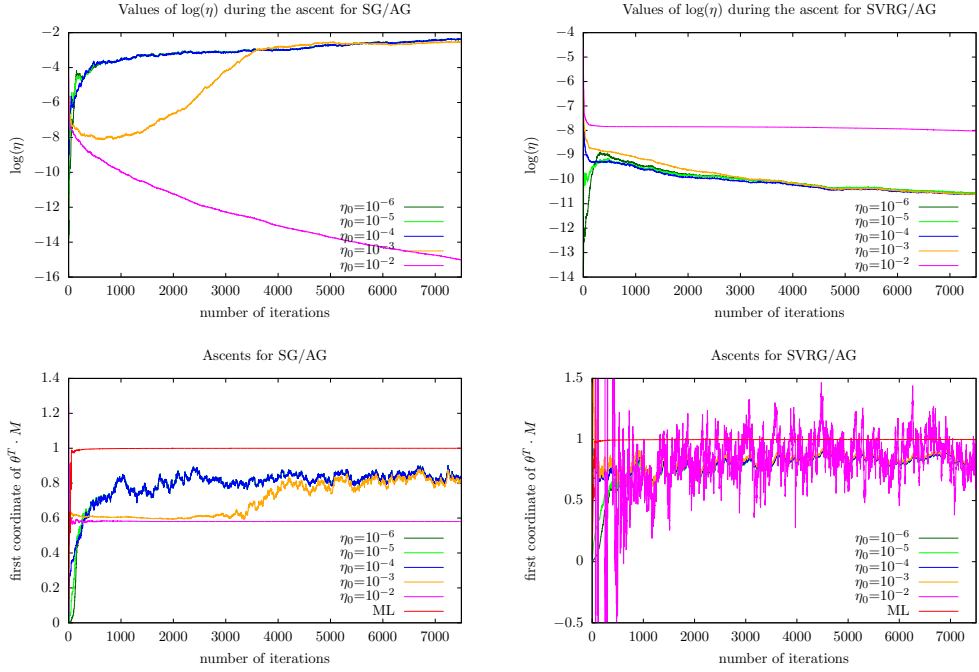

\label{lg50d}
\centering
\begin{tabular}{cc}
  \scalebox{.5}{\input{\ngl{MOINDRES_CARRES}{\nom{SG}{SG}}}} &
  \scalebox{.5}{\input{\ngl{MOINDRES_CARRES}{\nom{SVRG}{SG}}}} \\ 
  \scalebox{.5}{\input{\ngm{MOINDRES_CARRES}{\nom{SG}{SG}}}} &
  \scalebox{.5}{\input{\ngm{MOINDRES_CARRES}{\nom{SVRG}{SG}}}}
\end{tabular}
\caption{Evolution of $\log(\eta_t)$ in regard of the corresponding ascents for a $50$-dimensional linear regression model for SG and SVRG with LLR and several $\eta_0$'s}
\label{lrg50d}
\end{figure}
\begin{figure}[!h]
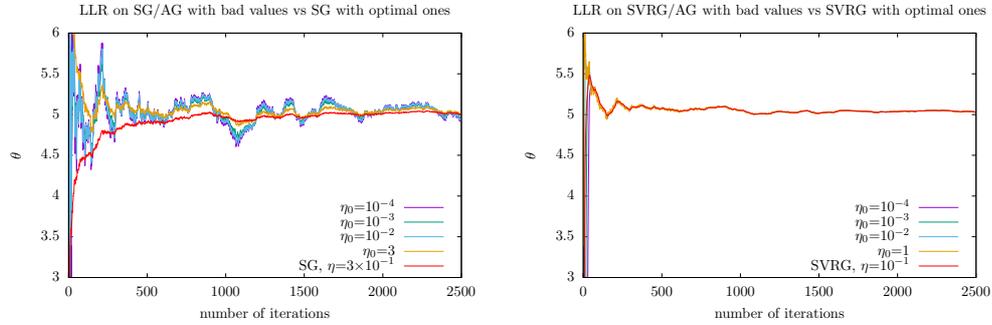

\centering
\begin{tabular}{cc}
  \scalebox{.5}{\input{\ngmm{GAUSSIEN}{\nom{SG}{SG}}}} &
  \scalebox{.5}{\input{\ngmm{GAUSSIEN}{\nom{SVRG}{SG}}}} \\ 
\end{tabular}
\caption{Trajectories of the ascents for a Gaussian model in one dimension for LLR algorithms with poor $\eta_0$'s and original algorithms with empirically optimal $\eta$'s}
\label{llrgvg} 
\end{figure}
\subsubsection{LLR versus hand-crafted learning rates}
\begin{figure}[!h]
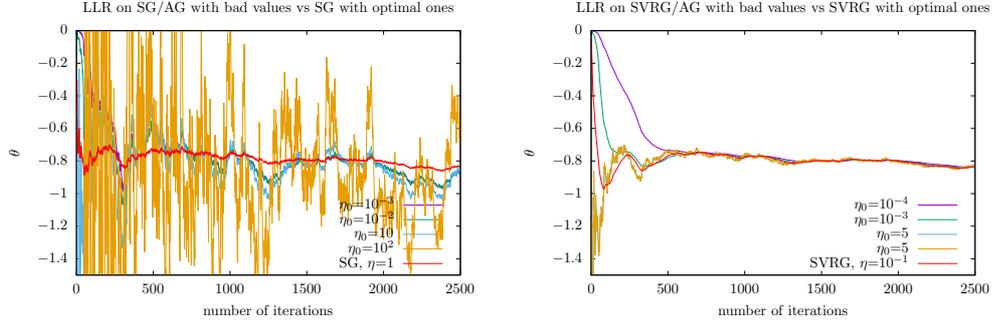

\centering
\begin{tabular}{cc}
  \scalebox{.5}{\input{\ngmm{BERNOULLI}{\nom{SG}{SG}}}} &
  \scalebox{.5}{\input{\ngmm{BERNOULLI}{\nom{SVRG}{SG}}}} \\ 
\end{tabular}
\caption{Trajectories of the ascents for a Bernoulli model for LLR algorithms with poor $\eta_0$'s and original algorithms with empirically optimal $\eta$'s}
\label{llrgvl} 
\end{figure}
Figures~\ref{llrgvg} to \ref{llrgvrg} show the trajectories of the ascents for LLR algorithms with poor initial values of the step size, compared to the trajectories of the original algorithms with hand-crafted optimal values of $\eta$. The trajectories of the original algorithms appear in red. They possess only two graphs each, where all the trajectories are pretty much undistinguishable from another. 
This shows that the LLR algorithms show acceptable behaviour even with poor initial values of $\eta$, proving the procedure is able to rescue very badly initialised algorithms.
However, one caveat is that the LLR procedure encounters difficulties dealing with too large values of $\eta_0$, and is much more efficient at dealing with small values of $\eta_0$. We have no satisfying explanation of this phenomenon yet.
We thus suggest, in practice, to underestimate rather than overestimate the initial
value $\eta_0$.
\begin{figure}[!h]
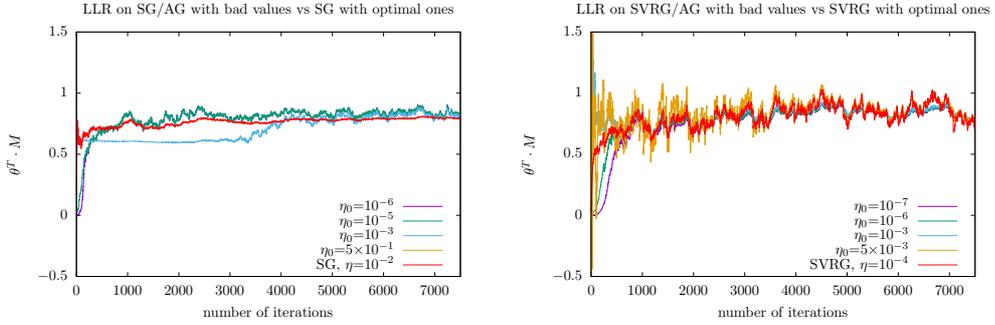

\centering
\begin{tabular}{cc}
  \scalebox{.5}{\input{\ngmm{MOINDRES_CARRES}{\nom{SG}{SG}}}} &
  \scalebox{.5}{\input{\ngmm{MOINDRES_CARRES}{\nom{SVRG}{SG}}}} \\ 
\end{tabular}
\caption{Trajectories of the ascents for a $50$-dimensional linear regression model for LLR algorithms with poor $\eta_0$'s and original algorithms with empirically optimal $\eta$'s}
\label{llrgvrg} 
\end{figure}
\subsection{$\eta_t$ in a quadratic model}
In a quadratic deterministic one-dimensional model, where we want to maximise:
\begin{equation}
  f(\theta)=-\alpha \frac{x^2}{2},
 \end{equation}
 SG is numerically stable if, and only if,
 \begin{equation}
	 \left\vert 1-\frac{\alpha \eta}{f(t)} \right\vert < 1,
\end{equation}
that is
 \begin{equation}
	 \frac{\eta}{2f(t)} < \alpha^{-1}.
\end{equation}
Each graph of Figure~\ref{cv_inv_pvp} has two curves, one for the original algorithm, the other for its LLR version. The curve of the LLR version goes down quickly, then much more slowly, while the other curve goes down slowly all the time. This shows that, for $\alpha=10^8$, the ratio above converges quickly towards $\alpha^{-1}$ for SG/AG and SVRG/AG, showing the ascent on $\eta$ is indeed efficient. Then, the algorithm has converged, and $\eta_t$ stays nearly constant, so much so that the LLR curve behaves like the other one. However, the convergence of $\eta_t$ happens too slowly: $\theta_t$ takes very large values before $\eta_t$ reaches this value, and even though it eventually converges to $0$, such behaviour is unacceptable in practise. 
\begin{figure}[!h]
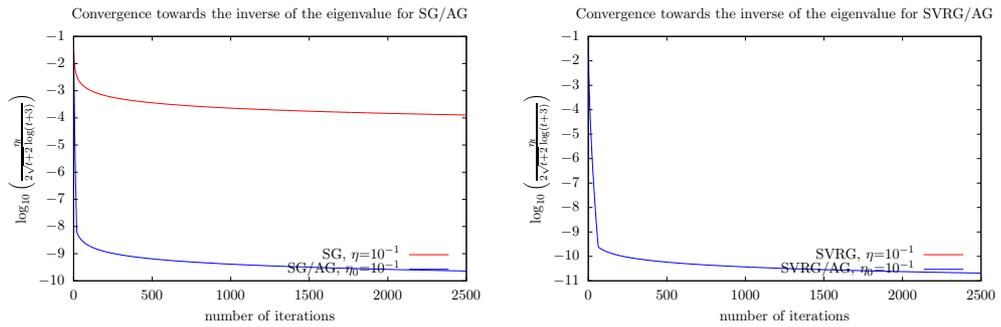

\begin{tabular}{cc}
  \scalebox{.5}{\input{\ngcvinvpvp{D_DIM}{\nom{SG}{SG}}}} &
  \scalebox{.5}{\input{\ngcvinvpvp{D_DIM}{\nom{SVRG}{SG}}}}
\end{tabular}
\caption{Evolution of $\log_{10} \left( \frac{\eta_t}{2\sqrt{t+2}\log(t+3)} \right)$ for a quadratic deterministic one-dimensional model for SG, SVRG and their LLR versions}
\label{cv_inv_pvp}
\end{figure}
\section{A pathwise interpretation of the derivatives}
\label{sec:fllr}
Until now, we have tried to optimise the step size for a stochastic
gradient ascent. This may be interpreted as conducting a gradient ascent
on the subspace of the ascent algorithms which gathers the stochastic
gradient algorithms, parametrised by $\eta \in \mathbb{R}$. However, we
had to replace the $A_t(\eta)$'s by the $h_t$'s because computing the
former gave our algorithm a quadratic complexity in time. Indeed,
adhesion to Equation~\ref{eqrecA} entails using $A_0(\eta_1)$ to compute
$A_1(\eta_1)$, for instance. Likewise, $A_0(\eta_2)$ and $A_1(\eta_2)$
would be necessary to compute $A_2(\eta_2)$, and this scheme would repeat
itself for every iteration.

We now introduce a formalism which shows the approximations we use are
actually derivatives taken alongside the effective trajectory of the
ascent.  It will also allow us to devise a new algorithm. It will, however, not account for the approximation of the
Hessian.

To this avail, let us parameterise stochastic gradient algorithms by a sequence of step sizes 
\begin{equation}
	\et = \left( \eta_0, \eta_1, \ldots \right)
\end{equation} 
such that at iteration $t$, the update equation for $\theta_t$ becomes:
\begin{equation}
  \label{eqrectheta2f} 
  \theta_{t+1} = \theta_t + \frac{\eta_{t+1}}{f(t)} \partial_\theta \ell_t(\theta_t). \\
\end{equation}
\subsection{The loss as a function of step size: extension of the formalism}
Consider the space $\mathcal{S}$ of infinite real sequences
\begin{equation}
	\et=\left( \eta_0, \eta_1, \eta_2, \ldots \right)
 \end{equation}
We expand the $\ti_t$ operators defined in Section~\ref{secllrsg} to
similar ones defined on $\mathcal{S}$, with the same notation. Namely, define $\ti_0(\et)=\theta_0$ and, for $t>0$,
\begin{equation}
	T_t : \et \in \mathcal{S} \mapsto \ti_t(\et) \in \mathbb{R}
\end{equation}
where $\theta_t$ has been obtained thanks to t iterations of \eqref{eqrectheta2f}.
$T_t$ is a regular function of $\et$, as the computations only involve
 \begin{equation}
 \eta_0, \eta_1, \ldots, \eta_t,
 \end{equation}
 and so take place in finite-dimensional spaces. This will apply in all the computations below. 
As before, we work on a space we call $\log(\mathcal{S})$, the image of $\mathcal{S}$ by the mapping 
\begin{equation}
\et=(\eta_t)_{t \geq 0} \in \mathcal{S} \mapsto \log(\et)=(\log \eta_t)_{t \geq 0},
\end{equation}
but we do not change notation for the functions $\et \mapsto \ti_t(\et)$, as in Section~\ref{secllrsg}.
\subsection{The update of the step size in the SG/SG algorithm as a gradient ascent} 
\label{secjsg2} 
We now prove that in SG/SG, when the Hessian is used without approximations, the step size $\eta_t$ indeed follows a gradient ascent scheme.
\begin{proposition}
\label{pjsgsg} 
Let
\begin{equation}
	\left( \theta_t \right)_{t \geq 0}, \hspace{.5 cm} \et= \left( \eta_t \right)_{t \geq 0}
\end{equation}
be the sequences of parameters and step-sizes obtained with the SG/SG algorithm, where the Hessian is not approximated: this is Algorithm~\ref{asg2} where the update on $h_t$ is replaced with
\begin{equation}
	h_{t+1}= h_t + \frac{\eta}{\mu_t} \partial_\theta \ell_t(\theta_t) + \frac{\eta}{\mu_t} \, \partial_{\theta}^2 \ell_t(\theta_t) \cdot h_t. 
\end{equation}
Define $e$ in the tangent plane of $\log(\mathcal{S})$ at $\log(\et)$ by
\begin{equation}
	e_t=1, \hspace{.5 cm} t \geq 0.
\end{equation}
Then, for all $t \geq 0$,
\begin{equation}
	\log \eta_{t+1}=\log \eta_t + \frac{1}{\mu_t} \frac{\partial}{\partial e} \ell_t(T_t(\et)).
\end{equation}
\end{proposition}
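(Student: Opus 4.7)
The plan is to identify the quantity $h_t$ computed by the Hessian-exact version of Algorithm~\ref{asg2} with the directional derivative of $\ti_t$ taken at the \emph{actual} sequence $\et=(\eta_0,\eta_1,\ldots)$ generated by the run, along the all-ones direction $e$ in $\log(\mathcal{S})$. Once this identification is in hand, the chain rule
$$\frac{\partial}{\partial e}\ell_t(\ti_t(\et)) = \partial_\theta \ell_t(\theta_t)\cdot \frac{\partial \ti_t(\et)}{\partial e} = \partial_\theta \ell_t(\theta_t)\cdot h_t$$
immediately converts the SG/SG update $\log\eta_{t+1}=\log\eta_t + \frac{1}{\mu_t}\,\partial_\theta\ell_t(\theta_t)\cdot h_t$ into the claimed identity.

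To perform this identification I would introduce the one-parameter perturbation $\et^\epsilon$ obtained by shifting every $\log\eta_s$ by the same $\epsilon$, i.e.\ $\eta_s^\epsilon=e^\epsilon\eta_s$, and set $H_s \deq \partial_\epsilon \ti_s(\et^\epsilon)|_{\epsilon=0}$, which is the directional derivative $\partial \ti_s(\et)/\partial e$. Since $\ti_0$ is a constant map, $H_0=0$, matching $h_0=0$. Differentiating the recursion \eqref{eqrectheta2f} along the perturbed trajectory,
$$\ti_{s+1}(\et^\epsilon) = \ti_s(\et^\epsilon) + \frac{e^\epsilon \eta_{s+1}}{f(s)}\,\partial_\theta \ell_s(\ti_s(\et^\epsilon)),$$
at $\epsilon=0$, and using $\partial_\epsilon(e^\epsilon \eta_{s+1})|_{\epsilon=0}=\eta_{s+1}$ together with a chain rule on $\partial_\theta\ell_s$, produces the recursion
$$H_{s+1} = H_s + \frac{\eta_{s+1}}{f(s)}\,\partial_\theta \ell_s(\theta_s) + \frac{\eta_{s+1}}{f(s)}\,\partial_\theta^2 \ell_s(\theta_s)\cdot H_s.$$
This is exactly the Hessian-exact update of $h_s$ stated in the proposition (reading its coefficient as $\eta_{s+1}/f(s)$, in agreement with Algorithm~\ref{asg2}), so a straightforward induction yields $h_s=H_s$ for every $s$, closing the argument.

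The main subtlety is conceptual rather than computational: $H_t$ is the derivative of $\ti_t$ evaluated at the actual step-size history $\et$, not at any fixed constant $\eta$. This is exactly why $h_t$—which the algorithm updates online with no retroactive correction when later $\eta_s$ changes—nevertheless admits an \emph{exact} interpretation as a derivative: infinitesimally shifting every past logarithmic step size by the same amount is precisely the perturbation that the online recursion tracks, and the direction $e=(1,1,\ldots)$ is the faithful tangent-space expression of that shift.
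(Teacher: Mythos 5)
Your proposal is correct and takes essentially the same route as the paper: you identify $h_t$ with the directional derivative $\frac{\partial}{\partial e}\ti_t(\et)$ taken at the actual step-size history, derive exactly the recursion of the paper's Lemma~\ref{leqjsg2} (your one-parameter perturbation $\eta_s^\epsilon = e^\epsilon \eta_s$ is just a concrete way of computing the directional derivative the paper obtains via the maps $F_t$ and $V_t$, using $\frac{\partial}{\partial e}\eta_{t+1}=\eta_{t+1}$), and close by the same induction and chain rule. Your reading of the coefficient in the Hessian-exact update as $\eta_{t+1}/f(t)$, consistent with Algorithm~\ref{asg2}, matches what the paper itself uses in its proof.
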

The proof lies in Appendix~\ref{cjsg2}.
\subsection{A new algorithm, using a notion of ``memory'' borrowed from \cite{pesky}}
\label{secfum} 
We would now like to compute the change in $\eta$ implied by a small
modification of all previous coordinates $\eta_s$ for $s$ less than the
current time $t$, but to compute the modification differently according
to whether the coordinate $s$ is ``outdated'' or not. To do it, we use the quantity $\tau_t$ defined in Section~4.2 of \cite{pesky} as the ``number of samples in recent memory''. We want to discard the old $\eta$'s and keep the recent ones. Therefore, at each time $t$, we compute 
\begin{equation}
  \gamma_t = \exp(-1/\tau_t).
\end{equation}
Choose $\et \in \log(\mathcal{S})$,
and consider the vector in the tangent plane to $\log(\mathcal{S})$ at $\et$:
\begin{equation}
  e_t^j = \left\lbrace \begin{aligned}
    \prod_{k=j}^t \gamma_j, \hspace{.5 cm} j \leq t \\
    0, \hspace{.5 cm} j \geq t+1.
  \end{aligned} \right.
\end{equation}
To run an algorithm using the $e_t$'s instead of $e$ as before, all we need to compute again is the formula for the update of the derivative below:
\begin{equation}
  \mathcal{H}_t \deq \frac{\partial}{\partial e_t}\ti_t(\et).
 \end{equation}
 $\mathcal{H}_t$ may indeed be computed, thanks to the following result.
\begin{proposition}
\label{puhvp} 
The update equation of $\mathcal{H}_t$ is:
\begin{equation}
  \label{equm} 
  \mathcal{H}_{t+1}=\gamma_{t+1} \mathcal{H}_t+ \gamma_{t+1} \, \frac{\eta_{t+1}}{f(t)} \partial_\theta \ell_t(\ti_t(\eta^t)) + \gamma_{t+1} \frac{\eta_{t+1}}{f(t)} \partial_{\ti}^2 \ell_t(\ti_t(\et)) \cdot \mathcal{H}_t .
\end{equation}
\end{proposition}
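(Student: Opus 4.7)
The plan is to differentiate the update equation \eqref{eqrectheta2f} for $T_{t+1}$ directly in the direction $e_{t+1}$, exploit the simple relation between $e_{t+1}$ and $e_t$, and collect terms. First I would record the key structural observation about the tangent vectors:
\begin{equation}
e_{t+1}^j = \gamma_{t+1}\,e_t^j \quad \text{for } j \leq t, \qquad e_{t+1}^{t+1} = \gamma_{t+1}, \qquad e_{t+1}^j = 0 \text{ for } j > t+1,
\end{equation}
which follows at once from $e_t^j = \prod_{k=j}^t \gamma_k$. In other words, $e_{t+1}$ restricted to coordinates $0,\ldots,t$ is just $\gamma_{t+1}$ times $e_t$, plus an extra component $\gamma_{t+1}$ along coordinate $t+1$.

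Next I would start from the $\et$--parametrised update
\begin{equation}
T_{t+1}(\et) = T_t(\et) + \frac{\eta_{t+1}}{f(t)}\,\partial_\theta \ell_t(T_t(\et))
\end{equation}
and apply $\partial_{e_{t+1}}$ to both sides. Crucially, $T_t$ depends only on $\eta_0,\ldots,\eta_t$, so its directional derivative involves only coordinates $j \leq t$ of $e_{t+1}$; by the key observation this yields
\begin{equation}
\frac{\partial T_t(\et)}{\partial e_{t+1}} = \sum_{j=0}^{t} e_{t+1}^j\,\frac{\partial T_t(\et)}{\partial \log \eta_j} = \gamma_{t+1}\sum_{j=0}^{t} e_t^j\,\frac{\partial T_t(\et)}{\partial \log \eta_j} = \gamma_{t+1}\,\mathcal{H}_t .
\end{equation}
This handles the first term on the right-hand side.

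For the second term $\frac{\eta_{t+1}}{f(t)}\partial_\theta \ell_t(T_t(\et))$ I would use the product rule, splitting it into the piece where the derivative hits the prefactor $\eta_{t+1}$ and the piece where it hits $\partial_\theta \ell_t(T_t(\et))$. Since $\eta_{t+1}$ depends only on coordinate $t+1$ and we differentiate with respect to $\log \eta_{t+1}$, the first piece contributes $e_{t+1}^{t+1}\cdot \frac{\eta_{t+1}}{f(t)}\,\partial_\theta \ell_t(T_t(\et)) = \gamma_{t+1}\,\frac{\eta_{t+1}}{f(t)}\partial_\theta \ell_t(T_t(\et))$. For the second piece, the chain rule gives $\frac{\eta_{t+1}}{f(t)}\,\partial_\theta^2 \ell_t(T_t(\et))\cdot \partial_{e_{t+1}} T_t(\et)$, and reusing the computation above this equals $\gamma_{t+1}\,\frac{\eta_{t+1}}{f(t)}\,\partial_\theta^2 \ell_t(T_t(\et))\cdot \mathcal{H}_t$. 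Summing the three contributions yields exactly \eqref{equm}.

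The only delicate points are bookkeeping: keeping the ``log'' parametrisation straight so that differentiating $\eta_{t+1}$ along $\log \eta_{t+1}$ reproduces $\eta_{t+1}$ itself, and being careful that $e_{t+1}$ is a vector in the tangent space at $\et$ (not at a shifted point), so that the directional derivative is well defined and the restriction to the first $t+1$ coordinates really is $\gamma_{t+1}\,e_t$ plus a component at index $t+1$. Once this identification is made the proof is just one application of the product rule and one application of the chain rule; no estimate or limiting argument is needed, since $T_{t+1}$ is a smooth function of finitely many coordinates.
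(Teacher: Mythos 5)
Your proof is correct and follows essentially the same route as the paper: both rest on the decomposition $e_{t+1}=\gamma_{t+1}(e_t+f_{t+1})$, the fact that $T_t$ does not depend on $\eta_{t+1}$ (so $\partial_{e_{t+1}}T_t(\et)=\gamma_{t+1}\mathcal{H}_t$), and the log-parametrisation giving $\partial_{e_{t+1}}\eta_{t+1}=\gamma_{t+1}\eta_{t+1}$. The only cosmetic difference is that you apply the product and chain rules directly to the update equation, whereas the paper packages that step as Lemma~\ref{lemegFV} (equation~\eqref{eqpd}) before specialising to the direction $e_{t+1}$.
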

The proof lies in Section~\ref{cfum}.
\section*{Acknowledgements}
The first author would like to thank Gaetan Marceau-Caron for his
advice on programming, and Jérémy Bensadon for crucial help with \LaTeX.
\bibliographystyle{alpha}
\bibliography{biblio}
\newpage
\appendix
\section{LLR applied to the Stochastic Variance Reduced Gradient}
\label{llrsvrg}
The Stochastic Variance Reduced Gradient (SVRG) was introduced by Johnson and Zhang in \cite{svrg}. We define here a version intended for online use.
\begin{algo}[SVRG online]
	We maintain $\theta_t, \theta^b \in \Theta$ (current parameter and base parameter) and $s^b_t \in \ta{\simeq \theta_t}$ (sum of the gradients of the $\ell_s$ computed at $\theta_s$ up to time $t$).

	$\theta$ is set to $\theta_0$ and $\theta^b$ along $s^b$ to $0$.

The update equations read: 
\begin{equation}
	\left\lbrace \begin{aligned}
		s^b_{t+1} &=s^b_t + \partial_\theta \ell_t(\theta^b) \\
		\theta_{t+1} &= \theta_t + \eta \left( \partial_\theta \ell_t(\theta_t) - \partial_\theta \ell_t(\theta^b) + \frac{s^b_{t+1}}{t+1} \right).
		\end{aligned} \right.
\end{equation}
\end{algo}
We now present the LLR version, obtained by updating the $\eta$ of SVRG thanks to an SG ascent. 
We call this algorithm ``SVRG/SG''.
\begin{algo}[SVRG/AG]
  We maintain $\theta_t, \theta^b \in \Theta$ (current parameter and base parameter), $\eta_t \in \mathbb{R}$ (current step size), $s^b_t \in \ta{\simeq \theta_t}$ (sum of the gradients of the $\ell_s$ computed at $\theta_s$ up to time $t$), $h_t \in \ta{\theta_t}$ (approximation of the derivative of $\ti_t$ with respect to $\log(\eta)$ at $\eta_t$) and the real numbers $n_t$ (average of the squared norms of the $\lambda_s$ defined below) and $d_t$ (renormalising factor for the computation of $n_t$).

$\theta$ is set to $\theta_0$, the other variables are set to $0$.

At each time $t$, we compute $\mu_t \in \mathbb{R}$ (a rate used in several updates), and $\lambda_t \in \mathbb{R}$ (the approximate derivative of $\ell_t \circ \theta_t$ with respect to $\log(\eta)$ at $\eta_t$).

The update equations read:
\begin{equation}
	\left\lbrace \begin{aligned}
		\mu_t &= \sqrt{t+2}\log(t+3) \\
		\lambda_t &= \partial_\theta \ell_t(\theta_t) \cdot h_t \\
	  d_{t+1} &=\left(1-\frac{1}{\mu_t} \right) d_t + \frac{1}{\mu_t} \\
		n_{t+1}^2 &= \left( \left(1-\frac{1}{\mu_t} \right) n_t^2 + \frac{1}{\mu_t} \lambda_t^2 \right) d_{t+1}^{-1} \\
		\eta_{t+1} &= \eta_t \exp \left( \frac{1}{\mu_t} \, \frac{\lambda_t}{n_{t+1}} \right) \\
		s^b_{t+1} &=s^b_t + \partial_\theta \ell_t(\theta^b) \\
		h_{t+1} &= h_t + \eta_{t+1} \left( \partial_\theta \ell_t(\theta_t+h_t) - \partial_\theta \ell_t(\theta^b)+\frac{s^b_{t+1}}{t+1} \right) \\
		\theta_{t+1} &= \theta_t + \eta_{t+1} \left( \partial_\theta \ell_t(\theta_t) - \partial_\theta \ell_t(\theta^b) + \frac{s^b_{t+1}}{t+1} \right). \\
	\end{aligned} \right.
\end{equation}
\end{algo}
\newpage
\section{LLR applied to a general stochastic gradient algorithm}
\label{llrgen} 
Let $\Theta$ and $H$ be two spaces. $\Theta$ is the space of parameters, $H$ is that of hyperparameters. In this section, a parameter potentially means a tuple of parameters in the sense of other sections. For instance, in SVRG/SG online, we would call a parameter the couple
\begin{equation}
\left( \theta_t, \theta^b \right).
\end{equation}
Likewise, in the same algorithm, we would call a hyperparameter the couple
\begin{equation}
	\left( \eta_t, h_t \right).
\end{equation}
Let
\begin{equation} 
\begin{aligned}
F &: &\Theta \times H &\to \Theta \\
& & \left(\theta, \eta \right) &\mapsto  F(\theta,\eta).
\end{aligned}
\end{equation}
be differentiable with respect to both variables. We consider the algorithm:
\begin{equation}
\theta_{t+1}=F(\theta_t,\eta_t).
\end{equation}
Let us present its LLR version. We call it GEN/SG, GEN standing for ``general''.
\begin{algo}[GEN/SG]
  We maintain $\theta_t \in \Theta$ (current parameter), $\eta_t \in H$ (current hyperparameter), $h_t \in \ta{\theta_t}$ (approximation of the derivative of $\ti_t$ in the direction of $e \in T_{\eta_t}H$).

$\theta$ and $\eta$ are set to user-defined values.

The update equations read:
\begin{equation}
	\left\lbrace \begin{aligned}
	 \eta_{t+1} &= \eta_t + \alpha \partial_{\theta} \ell_t(\theta_t) \cdot h_t \\
	 h_{t+1} &= \partial_\theta F(\theta_t,\eta_t) \cdot h_t + \partial_\eta F(\theta_t,\eta_t) \cdot \frac{\partial}{\partial e} \eta_t \\
	 \theta_{t+1} &= F \left(\theta_t,\eta_{t+1} \right).
       \end{aligned} \right.
\end{equation}
\end{algo}
\newpage
\section{Computations}
\label{sc} 
\subsection{Computations for Section~\ref{secllrsg}: proof of Fact~\ref{eqrecA}}
\label{cllrsg} 
\begin{proof}
$\theta_0$ is fixed, so $A_0(\eta)=0$.
Let $t \geq 0$. We differentiate \eqref{eqrecthetaf} with respect to $\log(\eta)$, to obtain:
\begin{equation}
\begin{aligned}
  \pa{t+1}{\log \eta} &= \pa{t}{\log \eta} + \frac{\eta}{f(t)} \, \partial_\theta \ell_t(\theta_t) + \frac{\eta}{f(t)} \partial_{\theta}^2 \ell_t(\theta_t(\eta)) \cdot \pa{t}{\log \eta},
\end{aligned}
\end{equation}
which concludes the proof.
\end{proof}

\subsection{Computations for Section~\ref{sec:fllr}} 
\subsubsection{Computations for Section~\ref{secjsg2}: proof of Proposition~\ref{pjsgsg}}
\label{cjsg2} 
To prove Proposition~\ref{pjsgsg}, we use the following three lemmas. The first two are technical, and are used in the proof of the third one, which provides an update formula for the derivative appearing in the statement of the proposition. We may have proceeded without these, as in the proof of Fact~\ref{eqrecA}, but they allow the approach to be more generic. 
\begin{lemma}
\label{lemdF} 
Let
\begin{equation}
\begin{array}{cccc}
	F_t : & \Theta \times \mathbb{R} & \to  \Theta \\
	& (\theta,\eta) & \mapsto & F_t(\theta,\eta)=\theta+\frac{\eta}{f(t)} \partial_\theta \ell_t(\theta).
\end{array}
\end{equation}
Then,
\begin{equation}
  \frac{\partial}{\partial \theta} F_t(\theta,\eta) =  \id + \frac{\eta}{f(t)} \partial_{\theta}^2 \ell_t(\theta)
\end{equation}
and
\begin{equation}
  \frac{\partial}{\partial \eta} F_t(\theta,\eta) = \frac{1}{f(t)} \partial_\theta \ell_t(\theta).
\end{equation}
$\id$ is the identity on the tangent plane to $\Theta$ in $\theta$.
\end{lemma}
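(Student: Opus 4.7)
The statement to prove is Lemma \ref{lemdF}, which simply records the partial derivatives of the one-step update map
\[
F_t(\theta,\eta) = \theta + \frac{\eta}{f(t)} \partial_\theta \ell_t(\theta)
\]
with respect to each of its two arguments. Since $F_t$ is explicitly given as a sum of two terms, and $\ell_t$ is assumed smooth, this is purely a direct differentiation: there is no genuine obstacle, only a need to identify tangent spaces correctly.

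The plan is to treat the two partial derivatives separately. First I would freeze $\eta$ and differentiate $F_t(\cdot,\eta)$ with respect to $\theta$. The first summand $\theta$ is the identity map on $\Theta$, whose differential at any point is $\id \in \mathrm{End}(\ta{\theta})$. The second summand $\frac{\eta}{f(t)} \partial_\theta \ell_t(\theta)$ is $\frac{\eta}{f(t)}$ times the gradient field of $\ell_t$; its differential in $\theta$ is by definition the Hessian $\partial_\theta^2 \ell_t(\theta)$, scaled by the constant $\frac{\eta}{f(t)}$. Adding the two contributions gives the stated formula
\[
\frac{\partial}{\partial\theta}F_t(\theta,\eta) = \id + \frac{\eta}{f(t)} \partial_\theta^2 \ell_t(\theta).
\]

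Next I would freeze $\theta$ and differentiate with respect to $\eta$. The first summand $\theta$ is constant in $\eta$ and contributes $0$. The second summand depends linearly on $\eta$ through the scalar $\frac{\eta}{f(t)}$, with the $\Theta$-valued factor $\partial_\theta \ell_t(\theta)$ independent of $\eta$, so its derivative in $\eta$ is $\frac{1}{f(t)} \partial_\theta \ell_t(\theta)$. This yields the second claim.

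The only subtlety worth flagging is typographical rather than mathematical: one should check the convention used for the Hessian, so that $\partial_\theta^2 \ell_t(\theta)$ is viewed as an endomorphism of $\ta{\theta}$ (acting on tangent vectors by the chain rule on the gradient map), which is exactly what is needed to make the sum $\id + \frac{\eta}{f(t)} \partial_\theta^2 \ell_t(\theta)$ a well-defined element of $\mathrm{End}(\ta{\theta})$. With this convention fixed, the two computations above complete the proof.
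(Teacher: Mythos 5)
Your proof is correct: both partial derivatives follow by direct differentiation of the explicitly given map, with the Hessian read as an endomorphism of $\ta{\theta}$. The paper in fact states this lemma without an explicit proof, treating it as exactly the immediate computation you carry out, so your argument matches the intended one.
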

\begin{lemma}
\label{lemegFV} 
Let
\begin{equation}
\begin{array}{cccc}
	V_t : & \mathcal{S} & \to   \Theta \times \mathbb{R} \\
       & \et & \mapsto & V_t(\et)=(\theta_t(\eta),\eta_{t+1}). 
\end{array}
\end{equation}
Consider $\log(\et) \in \log(\mathcal{S})$, and any vector $e$ tangent to $\log(\mathcal{S})$ at this point. 
Then the directional derivative of
\begin{equation}
\begin{array}{cccc}
	F_t \circ V_t : & \mathcal{S} & \to   \Theta \\
      & \et & \mapsto & F_t(V_t(\et))=\ti_t(\et)+\frac{\eta_{t+1}}{f(t)} \partial_\theta \ell_t(\ti_t(\et))
\end{array}
\end{equation}
at the point $\log(\et)$ and in the direction $e$ is 
\begin{equation}
  \begin{aligned}
	  \frac{\partial}{\partial e} F_t \circ V_t (\et) &= \pa{t}{e} + \frac{\partial}{\partial e} \eta_{t+1} \, \frac{1}{f(t)} \partial_\theta \ell_t(\ti_t(\et)) + \frac{\eta_{t+1}}{f(t)} \partial_{\theta}^2 \ell_t(\ti_t(\et)) \cdot \pa{t}{e} .
  \end{aligned}
\end{equation}
\end{lemma}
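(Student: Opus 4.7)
The plan is to view $F_t\circ V_t$ as a smooth composition of maps between finite-dimensional Euclidean spaces and apply the classical multivariable chain rule, then plug in the explicit partial derivatives of $F_t$ furnished by Lemma~\ref{lemdF}. Indeed, $T_t(\et)$ depends only on the coordinates $\eta_0,\ldots,\eta_t$ through the iteration \eqref{eqrectheta2f}, and $V_t(\et)=(T_t(\et),\eta_{t+1})$ therefore depends only on $\eta_0,\ldots,\eta_{t+1}$. Consequently $F_t\circ V_t$ factors through a smooth map on $\mathbb{R}^{t+2}$, and the directional derivative at $\log(\et)$ in any direction $e$ tangent to $\log(\mathcal{S})$ depends only on the coordinates $e_0,\ldots,e_{t+1}$; no infinite-dimensional subtlety arises.

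\textbf{Chain rule.} Writing $V_t$ as the pair $(T_t(\et),\eta_{t+1})$ and differentiating $F_t\circ V_t$ in the direction $e$ with respect to the log-coordinates yields
\begin{equation*}
\frac{\partial}{\partial e}(F_t\circ V_t)(\et) = \frac{\partial F_t}{\partial \theta}\bigl(V_t(\et)\bigr)\cdot\frac{\partial T_t}{\partial e}(\et) + \frac{\partial F_t}{\partial \eta}\bigl(V_t(\et)\bigr)\cdot\frac{\partial \eta_{t+1}}{\partial e},
\end{equation*}
where the partial derivatives $\partial F_t/\partial\theta$ and $\partial F_t/\partial\eta$ are taken in the ambient coordinates on $\Theta\times\mathbb{R}$. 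The factor $\partial\eta_{t+1}/\partial e$ is itself the derivative of $\eta_{t+1}$ viewed as a function of the log-coordinates $\log\et$, so the conversion between $\eta$-coordinates and $\log\eta$-coordinates for the second slot of $V_t$ is already absorbed into this scalar.

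\textbf{Substitution and conclusion.} Applying Lemma~\ref{lemdF} at the point $V_t(\et)=(T_t(\et),\eta_{t+1})$ gives
\begin{equation*}
\frac{\partial F_t}{\partial\theta}\bigl(V_t(\et)\bigr) = \id + \frac{\eta_{t+1}}{f(t)}\,\partial_\theta^2\ell_t(T_t(\et)),\qquad \frac{\partial F_t}{\partial\eta}\bigl(V_t(\et)\bigr) = \frac{1}{f(t)}\,\partial_\theta\ell_t(T_t(\et)).
\end{equation*}
Substituting these into the chain-rule formula and distributing $\id$ across $\partial T_t/\partial e$ produces exactly the three summands appearing in the statement, in the order claimed. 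I anticipate no genuine obstacle: the computation is essentially bookkeeping, all smoothness and dimensionality issues reduce to the finite-dimensional iteration on $\mathbb{R}^{t+2}$, and the only substantive input is the explicit form of the partial derivatives of $F_t$ given by Lemma~\ref{lemdF}.
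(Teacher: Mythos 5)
Your proof is correct and follows exactly the route the paper intends: the paper gives no separate argument for this lemma beyond treating it as an immediate application of the chain rule to $F_t\circ V_t=F_t(T_t(\et),\eta_{t+1})$ together with the partial derivatives of $F_t$ supplied by Lemma~\ref{lemdF}, which is precisely your computation. Your remarks on finite-dimensionality and on the $\log\eta$ coordinates being absorbed into the scalar $\partial\eta_{t+1}/\partial e$ are consistent with the paper's later use of $\frac{\partial}{\partial e}\eta_{t+1}=\eta_{t+1}$.
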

We may then prove the following lemma.
\begin{lemma}
\label{leqjsg2} 
Define
\begin{equation}
	\mathcal{H}_t = \frac{\partial}{\partial e}\ti_t(\et).
\end{equation}
Then for all $t \geq 0$,
\begin{equation}
  \label{eqjsg2} 
  \mh_{t+1}=\mh_t + \frac{\eta_{t+1}}{f(t)} \partial_\theta \ell_t(\ti_t(\et))+ \frac{\eta_{t+1}}{f(t)} \partial_{\theta}^2 \ell_t(\ti_t(\et)) \cdot \mh_t.
\end{equation}
\end{lemma}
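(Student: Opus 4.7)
The plan is to derive the recursion as a direct chain-rule computation, reusing Lemmas~\ref{lemdF} and \ref{lemegFV}. First, observe that the SG update~\eqref{eqrectheta2f} can be recast as $T_{t+1}(\et) = F_t(V_t(\et))$, where $F_t$ and $V_t$ are precisely the operators introduced in those two lemmas. Thus $\mathcal{H}_{t+1} = \frac{\partial}{\partial e}(F_t\circ V_t)(\et)$, and the general formula established in Lemma~\ref{lemegFV} reads
\begin{equation*}
\mathcal{H}_{t+1} \;=\; \mathcal{H}_t \;+\; \frac{\partial}{\partial e}\eta_{t+1}\cdot\frac{1}{f(t)}\partial_\theta\ell_t(T_t(\et)) \;+\; \frac{\eta_{t+1}}{f(t)}\,\partial_\theta^2\ell_t(T_t(\et))\cdot\mathcal{H}_t.
\end{equation*}
Comparing with the target equation~\eqref{eqjsg2}, all that remains is to identify $\frac{\partial}{\partial e}\eta_{t+1}$ with $\eta_{t+1}$.

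Next, I would handle this coefficient by being explicit about the log parameterization. Since $e$ lives in the tangent plane of $\log(\mathcal{S})$ at $\log(\et)$ and has coordinates $e_s = 1$ for every $s$ (as fixed in Proposition~\ref{pjsgsg}), an infinitesimal displacement of size $\delta$ in direction $e$ sends $\log\eta_{t+1}$ to $\log\eta_{t+1}+\delta$, hence sends $\eta_{t+1}$ to $\eta_{t+1}e^{\delta} = \eta_{t+1}(1+\delta+o(\delta))$. Therefore
\begin{equation*}
\frac{\partial}{\partial e}\eta_{t+1} \;=\; \eta_{t+1},
\end{equation*}
and substituting this back into the chain-rule expression above yields exactly~\eqref{eqjsg2}.

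To close the argument, I would add the base case: $T_0(\et) = \theta_0$ is constant in $\et$, so $\mathcal{H}_0 = 0$, and the induction then propagates through the recursion just derived. The only genuine subtlety, rather than an obstacle, is the bookkeeping between the two parameterizations---the $\eta_{t+1}/f(t)$ factor in front of $\partial_\theta\ell_t$ in~\eqref{eqjsg2} is really the contribution of $(\partial \eta_{t+1}/\partial e)\cdot(1/f(t))$, and forgetting the multiplicative Jacobian of $\log\mapsto\exp$ would produce a $1/f(t)$ term instead of $\eta_{t+1}/f(t)$. Once this factor is accounted for, the proof reduces to routine application of the two preceding lemmas.
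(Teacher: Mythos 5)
Your proof is correct and follows essentially the same route as the paper: write $T_{t+1}(\et)=F_t\circ V_t(\et)$, invoke Lemma~\ref{lemegFV}, and identify $\frac{\partial}{\partial e}\eta_{t+1}=\eta_{t+1}$ from the log parameterization. You even justify this last identity more explicitly than the paper does (the base case and induction remarks are harmless but not needed, since the lemma is just the recursion itself).
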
 
\begin{proof}
The update equation of $\ti_t(\et)$, \eqref{eqrectheta2f}, is such that:
\begin{equation}
  \ti_{t+1}(\et) = \ti_t(\et) + \frac{\eta_{t+1}}{f(t)} \partial_\theta \ell_t(\ti_t(\et)) = F_t \circ V_t(\et). \\
\end{equation}
From the above and Lemma~\ref{lemegFV},
\begin{equation}
\label{eqpd} 
	\pa{t+1}{e}=\frac{\partial}{\partial e} \ti_t(\et) + \frac{\partial}{\partial e} \eta_{t+1} \frac{1}{f(t)} \partial_\theta \ell_t(\ti_t(\et)) + \frac{\eta_{t+1}}{f(t)} \partial_{\theta}^2 \ell_t(\ti_t(\et)) \cdot \frac{\partial}{\partial e} \ti_t(\et).
\end{equation}
Now,
\begin{equation}
\frac{\partial}{\partial e} \eta_{t+1} = \eta_{t+1},
\end{equation}
which concludes the proof.
\end{proof}
Finally, we prove Proposition~\ref{pjsgsg}.
\begin{proof}[Proof of Proposition~\ref{pjsgsg}]
It is sufficient to prove that, for all $t \geq 0$,
\begin{equation}
\frac{\partial}{\partial e} \ell_t(\ti_t(\et)) = \partial_\theta \ell_t(\theta_t) \cdot h_t,
\end{equation}
that is,
\begin{equation}
\partial_\theta \ell_t(\ti_t(\et)) \cdot \mh_t  = \partial_\theta \ell_t(\theta_t) \cdot h_t.
\end{equation}
Therefore, it is sufficient to prove that, for all $t \geq 0$, $\ti_t(\et)=\theta_t$ and $\mh_t=h_t$.
$\ti_0(\eta)=\theta_0$ by construction and, since $\theta_0$ does not depend on $\et$, $\mh_0=0=h_0$.
Assuming the results hold up to iteration $t$, it is straighforward that $\ti_{t+1}(\et)=\theta_{t+1}$, since for all $s \leq t$, $\ti_s(\et)=\theta_s$. Therefore, thanks to Lemma~\ref{leqjsg2}, $\mh_t$ and $h_t$ have the same update, so that $\mh_{t+1}=h_{t+1}$, which concludes the proof.
\end{proof}
\subsubsection{Computations for Section~\ref{secfum}: proof of Proposition~\ref{puhvp}}
\label{cfum} 
\begin{proof}
Thanks to \eqref{eqpd} in Lemma~\ref{lemegFV},
\begin{equation}
\begin{aligned}
	\pa{t+1}{e_{t+1}} &=\frac{\partial}{\partial e_{t+1}} \ti_t(\et) + \frac{\partial}{\partial e_{t+1}} \eta_{t+1} \frac{1}{f(t)} \partial_\theta \ell_t(\ti_t(\et)) \\
			  &+ \frac{\eta_{t+1}}{f(t)} \partial_{\theta}^2 \ell_t(\ti_t(\et)) \cdot \frac{\partial}{\partial e_{t+1}} \ti_t(\et),
\end{aligned}
\end{equation}
that is:
\begin{equation}
	\mathcal{H}_{t+1}=\frac{\partial}{\partial e_{t+1}} \ti_t(\et) + \frac{\partial}{\partial e_{t+1}} \eta_{t+1} \frac{1}{f(t)} \partial_\theta \ell_t(\ti_t(\et)) + \frac{\eta_{t+1}}{f(t)} \partial_{\theta}^2 \ell_t(\ti_t(\et)) \cdot \frac{\partial}{\partial e_{t+1}} \ti_t(\et).
\end{equation}
We first prove:
\begin{equation}
\frac{\partial}{\partial e_{t+1}} \ti_t(\et) = \gamma_{t+1} \pa{t}{e_t}.
\end{equation}
Define $(f_j)_{j \geq 0}$ the canonical basis of the tangent plane to $\log(\mathcal{S})$ at $\et$. Then,
\begin{equation}
e_{t+1} = \gamma_{t+1} (e_t + f_{t+1}).
\end{equation}
Therefore,
\begin{equation}
  \begin{aligned}
\frac{\partial}{\partial e_{t+1}} \ti_t(\et) &= \frac{\partial}{\partial e_{t+1}} \ti_t(\et) \\
&= \gamma_{t+1} \, \pav{t}{e_t} + \pav{t}{f_{t+1}} \\
&= \gamma_{t+1} \pav{t}{e_t}
\end{aligned}
\end{equation}
because the last term is $0$. Therefore, 
\begin{equation}
\frac{\partial}{\partial e_{t+1}} \ti_t(\et) = \gamma_{t+1} \mathcal{H}_t.
\end{equation}
Then, thanks to \eqref{eqpd}, 
\begin{equation}
	\frac{\partial}{\partial e_{t+1}} \eta_{t+1} = \gamma_{t+1} \eta_{t+1},
\end{equation}
which is true since
\begin{equation}
	\frac{\partial}{\partial e_{t+1}} \eta_{t+1} = 
	\gamma_{t+1} \, \frac{\partial}{\partial f_{t+1}} \eta_{t+1} = \gamma_{t+1} \eta_{t+1},
\end{equation}
and concludes the proof.
\end{proof}

\end{document}